\theoremstyle{plain}
\newtheorem{theorem}{Theorem}
\newtheorem{corollary}[theorem]{Corollary}
\newtheorem{claim}[theorem]{Claim}
\newtheorem{lemma}[theorem]{Lemma}
\newtheorem{conjecture}[theorem]{Conjecture}
\newenvironment{customthm}[1]
{\innercustomthm}
{\endinnercustomthm}
\theoremstyle{definition}
\author{Istv\'{a}n Tomon\thanks{\'{E}cole Polytechnique F\'{e}d\'{e}rale de Lausanne, Research partially supported by Swiss National Science Foundation grants no. 200020-162884 and 200021-175977.			
		\emph{e-mail}: \textbf{istvan.tomon@epfl.ch}}
}
\title{Almost tiling of the Boolean lattice with copies of a poset}
\begin{document}
\sloppy
\maketitle

\begin{abstract}
Let $P$ be a partially ordered set. If the Boolean lattice $(2^{[n]},\subset)$ can be partitioned into copies of $P$ for some positive integer $n$, then $P$ must satisfy the following two trivial conditions:
\begin{description}
	\item[(1)] the size of $P$ is a power of $2$,
	\item[(2)] $P$ has a unique maximal and minimal element.
\end{description} Resolving a conjecture of Lonc, it was shown by Gruslys, Leader and Tomon that these conditions are sufficient as well.

 In this paper, we show that if $P$ only satisfies condition (2), we can still almost partition $2^{[n]}$ into copies of $P$. We prove that if $P$ has a unique maximal and minimal element, then there exists a constant $c=c(P)$ such that all but at most $c$ elements of $2^{[n]}$ can be covered by disjoint copies of $P$.
\end{abstract}

\section{Introduction}

The \emph{Boolean lattice} $(2^{[n]},\subset)$ is the power set of $[n]=\{1,...,n\}$ ordered by inclusion. If $P$ and $Q$ are partially ordered sets (posets), a subset $P'\subset Q$ is a \emph{copy} of $P$ if the subposet of $Q$ induced on $P'$ is isomorphic to $P$. A \emph{chain} is a copy of a totally ordered set, and an \emph{antichain} is a copy of a totally unordered set. 

It is an easy exercise to show that every poset $P$ has a copy in $2^{[n]}$ for $n$ sufficiently large. Moreover, if $P$ has a unique maximal and minimal element, then every element of $2^{[n]}$ is contained in some copy of $P$. Therefore, it is natural to ask whether it is possible to partition $2^{[n]}$   into copies of $P$.

The case when $P$ is a chain of size $2^{k}$ was conjectured by Sands \cite{sands}. A slightly more general conjecture was proposed by Griggs \cite{griggs}: if $h$ is a positive integer and $n$ is sufficiently large, then  $2^{[n]}$ can be partitioned into chains such that at most one of these chains have size different from $h$. This conjecture was confirmed by Lonc \cite{lonc}. The author of this paper \cite{me} also established the order of the minimal $n$ for which such a partition exists.

\begin{theorem}\label{chainpartition}
	(\cite{me}) Let $h$ be a positive integer. If $n=\Omega(h^{2})$, the Boolean lattice $2^{[n]}$ can be partitioned into chains $C_{1},...,C_{r}$ such that $2h>|C_{1}|\geq h$ and $|C_{2}|=...=|C_{r}|=h$.
\end{theorem}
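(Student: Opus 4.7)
My plan is to refine a symmetric chain decomposition (SCD) of $2^{[n]}$ into a chain partition whose chains almost all have length exactly $h$. Fix an SCD $\{D_1,\dots,D_N\}$, where each chain $D_i$ spans the ranks $k_i$ through $n-k_i$ and has length $|D_i|=n-2k_i+1$. Call $D_i$ \emph{long} if $|D_i|\geq h$ and \emph{short} otherwise. Define the \emph{middle band} $B$ as the union of the rank levels in $[(n-h)/2,(n+h)/2]$: it contains every short SCD chain in full and meets every long chain in exactly $h+1$ consecutive elements. Since $n=\Omega(h^2)$ forces $h=O(\sqrt{n})$, a direct binomial estimate gives $|B|=O(h)\binom{n}{\lfloor n/2\rfloor}\ll 2^n$, so $B$ is a vanishing fraction of the lattice.

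For each long $D_i$, note that since $D_i$ is already a chain in $2^{[n]}$, any subset of its elements is again a chain, and so I am free to partition the elements of $D_i$ into groups of arbitrary sizes summing to $|D_i|$. I choose $r_i:=|D_i|\bmod h$ elements of $D_i\cap B$ (possible since $|D_i\cap B|=h+1>r_i$) and split the remaining $|D_i|-r_i$ elements into $\lfloor |D_i|/h\rfloor$ groups of size $h$; each such group is a length-$h$ chain. Let $\mathcal{R}$ denote the union of the chosen $r_i$-element sets across all long $D_i$ together with all short SCD chains. By construction $\mathcal{R}\subset B$, and the complement $2^{[n]}\setminus\mathcal{R}$ is already partitioned into chains of length exactly $h$.

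What remains — and this is the main obstacle — is to partition $\mathcal{R}$ into chains of length $h$ plus at most one exceptional chain of length in $[h,2h)$, whose size is forced to be the residue of $2^n$ modulo $h$ lifted into $[h,2h)$. Since $\mathcal{R}$ sits inside the thin slab $B$ of only $O(h)$ middle layers, the strategy is a direct construction within $B$. I would attempt it via a product decomposition $2^{[n]}\cong 2^{[n_1]}\times 2^{[n_2]}$ with $n_1=\Theta(h)$: for each $y\subset[n_2]$, the fibre $\{A\cup (y+n_1):A\subset[n_1]\}$ is a copy of the small Boolean lattice $2^{[n_1]}$, and its intersection with $\mathcal{R}$ is a predictable sub-poset that can be handled by a direct construction on $2^{[n_1]}$. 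A balancing argument across the $2^{n_2}$ fibres is then needed to merge the non-$h$ residuals between fibres into length-$h$ chains, leaving exactly one global outlier of length in $[h,2h)$. The quantitative bound $n=\Omega(h^2)$ presumably emerges from two places: requiring $n_1\geq h$ so the product decomposition has a meaningful inner factor, and requiring $h=O(\sqrt{n})$ so the middle band $B$ in paragraph one is a small enough fraction of the lattice for the residual set $\mathcal{R}$ to be manageable.
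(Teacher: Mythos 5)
First, note that the paper does not prove this statement at all: it is imported verbatim from \cite{me} (where it is Theorem 5.2), so there is no internal proof to compare against. Judged on its own terms, your proposal has a genuine gap, and it sits exactly where you say it does.

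The first two paragraphs are correct but do only the easy part. Trimming each long symmetric chain down to a multiple of $h$ and cutting it into consecutive blocks is immediate (any subset of a chain is a chain), and it is also immediate that the leftover set $\mathcal{R}$ can be pushed into a band $B$ of $h+1$ consecutive ranks. The entire content of the theorem is the step you label ``the main obstacle'': partitioning $\mathcal{R}$ into chains of length exactly $h$ (plus one exceptional chain). This is genuinely hard, for a structural reason your sketch does not confront: since $B$ spans only $h+1$ ranks, a chain of $h$ elements inside $B$ must occupy $h$ distinct ranks out of $h+1$, i.e.\ it must be a near-complete transversal of the band. Elements of $\mathcal{R}$ coming from different SCD chains at the same rank are incomparable, and the SCD gives you no control over comparabilities \emph{across} its chains, so there is no reason the fragments you discarded (an arbitrary $r_i$-subset of $D_i\cap B$ for each long chain, together with all the short chains -- of which there are $\Theta\bigl(\binom{n}{\lfloor n/2\rfloor}\bigr)$ in total) assemble into such transversals. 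In effect you have reduced the theorem to the theorem restricted to the middle band, which is where all known proofs (Lonc's, and the improvement in \cite{me}) do their real work.

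The closing paragraph does not repair this. The fibres $\{A\cup(y+n_1):A\subset[n_1]\}$ intersect the band $B$ in a window of ranks that shifts with $|y|$, so the trace of $\mathcal{R}$ on a fibre is not ``a predictable sub-poset''; and merging residuals \emph{between} fibres requires comparabilities $A\cup(y+n_1)\subset A'\cup(y'+n_1)$, hence $y\subseteq y'$, so the fibres themselves must be organized along chains of $2^{[n_2]}$ and the merging must respect both coordinates simultaneously. That ``balancing argument'' is not a routine verification -- it is essentially the inductive product construction that occupies the bulk of \cite{lonc} and \cite{me}. As written, the proposal is a correct reduction to an unproved statement that is not easier than the original.
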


We note that Theorem \ref{chainpartition} is stated in \cite{me} as Theorem 5.2 without the additional condition that $|C_{1}|\geq h$. However, the proof of Theorem 5.2 presented in \cite{me} actually gives this insignificantly stronger result, which we shall exploit in this paper.

Lonc \cite{lonc} also proposed two conjectures concerning the cases when $P$ is not necessarily a chain. First, he conjectured that if $P$ has a unique maximal and minimal element and size $2^{k}$, then $2^{[n]}$ can be partitioned into copies of $P$ for $n$ sufficiently large. It is easy to see that these conditions on $P$ are necessary, and it was proved by Gruslys, Leader and the author of this paper \cite{posettiling} that these conditions are sufficient as well.

\begin{theorem}(\cite{posettiling})\label{poset}
	Let $P$ a poset with a unique maximal and minimal element and $|P|=2^{k}$, $k\in \mathbb{N}$. If $n$ is sufficiently large, the Boolean lattice $2^{[n]}$ can be partitioned into copies of $P$.
\end{theorem}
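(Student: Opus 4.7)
The strategy splits into two components. First is a reduction: it suffices to find some $m = m(P)$ for which $2^{[m]}$ itself can be partitioned into copies of $P$. Indeed, writing $n = m + \ell$ and decomposing $[n] = A \mathbin{\dot{\cup}} B$ with $|A| = m$, the family of intervals $I_S = \{X \subseteq [n] : X \cap B = S\}$ indexed by $S \subseteq B$ partitions $2^{[n]}$, and each $I_S$ is order-isomorphic to $2^{[m]}$. Transferring a fixed partition of $2^{[m]}$ into copies of $P$ across each $I_S$ then produces the required partition of $2^{[n]}$.

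The substantive work is constructing such an $m$. My plan would be to fix an embedding $\phi : P \hookrightarrow 2^{[m_0]}$ with $\phi(\min P) = \emptyset$ and $\phi(\max P) = [m_0]$, which exists since $P$ has unique extrema, and then to exploit Theorem \ref{chainpartition}. The key observation is that the product poset $\phi(P) \times C_{2^j}$ (with $C_h$ a chain of size $h$) splits trivially into $2^j$ copies of $P$, so it suffices to partition $2^{[m]}$ into copies of $\phi(P) \times C_{2^j}$ for a well-chosen $j$. Applying Theorem \ref{chainpartition} with $h = 2^j$ partitions $2^{[m]}$ into chains of length $2^j$ (plus one slightly longer exceptional chain). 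Each target copy of $\phi(P) \times C_{2^j}$ is itself a bundle of $|P|$ parallel chains of length $2^j$, so the task reduces to grouping the chains produced by Theorem \ref{chainpartition} into bundles of $|P|$ chains whose relative position realizes $\phi(P)$.

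The main obstacle is precisely this alignment: a priori, the chains produced by Theorem \ref{chainpartition} are not arranged so that their bottom elements in $2^{[m]}$ form the required $\phi(P)$-configuration. I would attack this by induction on $k = \log_2 |P|$, with base case $|P|=2$ handled by the classical symmetric chain decomposition. For the inductive step, one applies Theorem \ref{chainpartition} at a level such that the set of chain-bottoms inherits a Boolean-like structure on which a smaller instance of the partition problem, involving a poset of size $2^{k-1}$, can be solved by the inductive hypothesis. The exceptional chain of size $|C_1|\geq h$ guaranteed by the strengthened form of Theorem \ref{chainpartition} would be absorbed in advance into a single pre-placed copy of $P$ built to tolerate a length-discrepancy in one of its constituent chains, which is exactly why the paper emphasizes the stronger conclusion $|C_{1}|\geq h$.
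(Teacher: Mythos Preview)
Your reduction step is fine, but the substantive plan has a genuine gap. The alignment problem you yourself flag is not a technical wrinkle to be cleaned up by induction---it is the entire content of the theorem, and your inductive sketch does not close it. Theorem~\ref{chainpartition} gives no structural information about the chains it produces beyond their lengths; the bottom elements of those chains do not inherit any Boolean-like structure, and there is no mechanism by which solving a partition problem for some unspecified poset of size $2^{k-1}$ would let you bundle the chains of $2^{[m]}$ into configurations realizing $\phi(P)$. You have not said which poset of size $2^{k-1}$ you recurse on, nor how a partition of the chain-bottoms into copies of it would assemble into a partition of $2^{[m]}$ into copies of $\phi(P)\times C_{2^j}$. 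As stated, the induction has no inductive step.

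The paper does not reprove this theorem---it is quoted from \cite{posettiling}---but the method used there, and reproduced in Section~\ref{grid} of this paper for the analogous grid statement (Theorem~\ref{gridpartitionthm}), is entirely different from yours and does not use Theorem~\ref{chainpartition} at all. One shows that the family $\mathcal{F}$ of copies of $P$ in some $2^{[m]}$ admits both a $t$-partition and a $(1\bmod t)$-partition (weighted coverings in which every point receives total weight $t$, respectively $\equiv 1\pmod t$), and then invokes the Gruslys--Leader--Tan lifting lemma (Theorem~\ref{Spartition}) to conclude that $(2^{[m]})^n\cong 2^{[mn]}$ has a genuine partition into copies of $P$ for $n$ large. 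The strengthened conclusion $|C_1|\ge h$ in Theorem~\ref{chainpartition} is exploited in the proof of Theorem~\ref{gridthm}, not here; your reading of its role is mistaken.
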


The second conjecture of Lonc targets posets which might not satisfy the conditions stated in Theorem \ref{poset}. In this case, it seems likely that we can still cover almost every element of $2^{[n]}$ with disjoint copies of $P$.

\begin{conjecture}(\cite{lonc})\label{loncconjecture}
	Let $P$ be a poset. If $n$ is sufficiently large and $|P|$ divides $2^{n}-2$, then $2^{[n]}\setminus \{\emptyset,[n]\}$ can be partitioned into copies of $P$.
\end{conjecture}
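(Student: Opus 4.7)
The plan is to decompose the argument into two phases: an \emph{almost-partition} phase producing a cover of $2^{[n]}\setminus\{\emptyset,[n]\}$ by disjoint copies of $P$ with only $O(1)$ uncovered elements, and a \emph{repair} phase that uses the divisibility hypothesis $|P|\mid 2^{n}-2$ to absorb the residual leftover into the partition.

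For the almost-partition phase, I would reduce to a setting where Theorem \ref{poset} is directly applicable. The idea is to build a larger poset $Q$ with a unique maximum and minimum and $|Q|=2^{k}$, such that a copy of $Q$ in the Boolean lattice can be internally tiled by copies of $P$ up to a bounded remainder. A natural first attempt is $Q=P\cup\{\hat{0},\hat{1}\}$ (which has a unique max/min but possibly the wrong size), followed if necessary by taking a suitable ``product'' of $Q$ with itself, or by embedding $Q$ into a Boolean lattice $2^{[N]}$ of power-of-two size with the two extremes used as $\hat{0}$ and $\hat{1}$. Applying Theorem \ref{poset} then partitions $2^{[n]}$ into copies of this augmented $Q$ for $n$ large; within each copy of $Q$, I would carve out as many copies of $P$ as possible, with the per-copy leftover bounded by a constant $c_{0}=c_{0}(P)$. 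The uncovered cells from all the $Q$-copies collectively form a structured set that I would then repackage using Theorem \ref{chainpartition}: partition these cells, together with $\{\emptyset,[n]\}$, into chains of length exactly $|P|$, and then reassemble the chains into further copies of $P$ whenever $P$ admits a chain-to-copy conversion. This requires care when $P$ is not itself a chain, and is where the main technical work lies.

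The repair phase then handles the $O(1)$ cells not accounted for above. The divisibility condition $|P|\mid 2^{n}-2$ guarantees that the final residual has cardinality divisible by $|P|$. With that arithmetic in hand, one would swap out a bounded family of the $P$-copies already constructed and replace them with a rearrangement whose union additionally contains the residual elements; such local surgery is possible because every element of the Boolean lattice lies in many copies of $P$ (recall from the introduction that every element of $2^{[n]}$ is in a copy of any $P$ with unique extremes, and this can be engineered here by working with the augmented $Q$).

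The main obstacle, and the reason Conjecture \ref{loncconjecture} is genuinely harder than the almost-partition version announced in the abstract, is this repair step. Even when the cardinality count is correct modulo $|P|$, the leftover elements can be geometrically awkward - concentrated near $\emptyset$ or $[n]$, or spread across incomparable antichains - in a way that obstructs fitting a copy of $P$ after any constant number of local swaps. Overcoming this would plausibly require an entirely new idea beyond the chain partition of Theorem \ref{chainpartition} and the tiling machinery of Theorem \ref{poset}, and I expect this is exactly why the paper proves only the almost-partition form of the conjecture.
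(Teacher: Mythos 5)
The statement you were asked about is Conjecture \ref{loncconjecture}, which is an \emph{open conjecture} due to Lonc: the paper does not prove it, and explicitly works instead with the weaker Conjecture \ref{mainconjecture}, which it resolves only for posets with a unique maximal and minimal element (Theorem \ref{mainthm}). So there is no ``paper's own proof'' to compare against, and your proposal---which you yourself concede stalls at the repair step---is not a proof. You correctly diagnose that the exact-partition form is genuinely harder than the almost-partition form, but the concessions at the end mean the proposal establishes nothing.

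Beyond that, the almost-partition phase as you describe it also has concrete defects. First, augmenting $P$ to $Q=P\cup\{\hat{0},\hat{1}\}$ gives $|Q|=|P|+2$, which is generally not a power of $2$, and no amount of taking cartesian powers of $Q$ fixes this (powers of $|P|+2$ are not powers of $2$ unless $|P|+2$ is); embedding $Q$ into some $2^{[N]}$ does not help either, because Theorem \ref{poset} partitions $2^{[n]}$ into copies of the poset you feed it, and a copy of $2^{[N]}$ is not a copy of $Q$. Second, even granting a partition of $2^{[n]}$ into copies of some $Q$ with a constant leftover $c_{0}$ per copy, the number of $Q$-copies is $2^{n}/|Q|$, so the total uncovered set has size $c_{0}2^{n}/|Q|$, a positive fraction of the lattice rather than $O(1)$; and Theorem \ref{chainpartition} partitions the entire Boolean lattice into chains, not an arbitrary ``structured'' subset, so it cannot be invoked to repackage that leftover. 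The route the paper actually takes for its weaker theorem is quite different: it first shows the grid $[2|P|]^{d}$ can be partitioned into copies of $P$ (Theorem \ref{gridpartitionthm}, via $t$- and $(1\bmod t)$-partitions and Theorem \ref{Spartition}), and then almost-partitions $2^{[n]}$ into such grids by taking products of chain partitions of smaller Boolean lattices (Theorem \ref{gridthm}), with the single bad grid $D_{1,\dots,1}$ being the constant-size exceptional set. If you want to attack Conjecture \ref{loncconjecture} itself, the paper's concluding section sketches a plausible program via the posets $S_{2k}$, which would be a better starting point than the augmentation-plus-repair scheme you outline.
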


In \cite{posettiling}, a slightly weaker conjecture is proposed.

\begin{conjecture}(\cite{posettiling})\label{mainconjecture}
	Let $P$ be a poset. There exists a constant $c=c(P)$ such that for every positive integer $n$, there exists $S\subset 2^{[n]}$ such that $2^{[n]}\setminus S$ can be partitioned into copies of $P$ and $|S|\leq c$.
\end{conjecture}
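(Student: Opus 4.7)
Let $m = |P|$ and write $m = 2^{a} b$ with $b$ odd. If $b = 1$ then $m$ is a power of two, so Theorem~\ref{poset} already produces an exact partition of $2^{[n]}$ into copies of $P$ for every $n \geq n_{0}(P)$; the finitely many smaller values of $n$ are absorbed into the constant $c$. So the substantive case is $b \geq 3$, which the plan addresses by establishing a finite combinatorial key lemma and iterating it.

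\textbf{Key Lemma.} \emph{There exists $k = k(P) > a$ with $2^{k} \equiv 2^{a} \pmod{m}$ such that $2^{[k]}$ can be partitioned into $(2^{k}-2^{a})/m$ copies of $P$ together with one interval $[X, X \cup Y] \subseteq 2^{[k]}$, where $X \subseteq [k]$, $Y \subseteq [k] \setminus X$, and $|Y| = a$.} (Such a $k$ exists arithmetically: for any odd $b \geq 3$ the choice $k = a + \phi(b)$ satisfies $2^{k} \equiv 2^{a} \pmod{m}$ by the Chinese Remainder Theorem applied to $2^{a}$ and $b$.)

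Given the Key Lemma, the theorem will follow by iteration on the dimension. For $n \geq k$, view $2^{[n]}$ as the product $2^{[k]} \times 2^{\{k+1,\dots,n\}}$: for each $T \subseteq \{k+1,\dots,n\}$ the slice $\{S \cup T : S \in 2^{[k]}\}$ is order-isomorphic to $2^{[k]}$. Apply the Key Lemma's partition inside every slice, always placing the residual interval at the same location $[X, X \cup Y]$. The copies of $P$ across different slices are pairwise disjoint, and the union of the residual intervals equals
\[
\{X \cup C \cup T : C \subseteq Y,\ T \subseteq \{k+1,\dots,n\}\},
\]
which as a subposet of $2^{[n]}$ is order-isomorphic to $2^{[a]} \times 2^{[n-k]} \cong 2^{[n-k+a]}$, a Boolean lattice of dimension $n - (k-a) < n$. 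Recursively apply the same procedure to this residual. Each iteration decreases the dimension by $k - a \geq 1$, so after at most $\lceil (n-k)/(k-a) \rceil + 1$ steps the residual Boolean lattice has dimension below $k$ and hence at most $2^{k-1}$ elements; that final residue is the desired $S$, and $|S| \leq 2^{k-1}$ is a constant depending only on $P$.

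The main obstacle is the Key Lemma itself, a purely finite claim about a single Boolean lattice. The plan for its proof is to construct an auxiliary poset $Q$ of power-of-two size $2^{k'}$ with a unique maximum and minimum that itself decomposes into copies of $P$ together with one distinguished sub-poset of size $2^{a}$, and then invoke Theorem~\ref{poset} so that $2^{[k]}$ is tiled by copies of $Q$ for $k$ sufficiently large. One then has to arrange the tiling so that the distinguished sub-posets coming from the individual $Q$-tiles aggregate into a single interval of dimension $a$ inside $2^{[k]}$, rather than into $2^{k-k'}$ scattered pieces. Engineering $Q$ with the prescribed internal structure, and forcing this alignment on the $Q$-tiling, are the two places where genuine combinatorial work is required; for the alignment step I would use Theorem~\ref{chainpartition} to reorganise chains of an ambient Boolean lattice into copies of $P$ while leaving a pre-specified interval untouched.
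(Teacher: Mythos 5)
Your global iteration is sound as far as it goes: if the Key Lemma held, slicing $2^{[n]}$ into copies of $2^{[k]}$, tiling each slice identically, and recursing on the union of residual intervals (which is indeed order-isomorphic to $2^{[n-k+a]}$) would leave a residue of at most $2^{k-1}$ elements. But the Key Lemma is where the entire difficulty of the problem lives, and you have not proved it; moreover, the plan you sketch for it does not cohere. If $2^{[k]}$ is tiled by $2^{k-k'}$ copies of an auxiliary poset $Q$ of size $2^{k'}$, each contributing a distinguished piece of size $2^{a}$, then the union of those pieces has $2^{k-k'+a}$ elements, not $2^{a}$, so it cannot be the interval $[X,X\cup Y]$ with $|Y|=a$ that the Key Lemma demands (unless $k=k'$, which is circular). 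Even with the dimension corrected, forcing the $2^{k-k'}$ distinguished pieces to align into a single interval is a strong structural constraint over which Theorem~\ref{poset} and Theorem~\ref{Spartition} give no control whatsoever, and Theorem~\ref{chainpartition} produces chains, not aligned copies of $Q$. Note also that the Key Lemma can only possibly hold when $P$ has a unique maximal and minimal element: otherwise $\emptyset$ and $[k]$ lie in no copy of $P$, so both must lie in the residual interval $[X,X\cup Y]$, forcing $X=\emptyset$ and $Y=[k]$, i.e.\ the residual is all of $2^{[k]}$. (Your base case $b=1$ likewise invokes Theorem~\ref{poset}, which requires a unique maximum and minimum.) So this route cannot reach the conjecture in any more generality than the paper's Theorem~A, and in the unique-maximum-and-minimum case it still hinges on an exact finite tiling statement at least as delicate as anything the paper proves.

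For contrast, the paper never needs such an exact statement with a prescribed residue shape. It first shows, via the $t$-partition and $(1\bmod t)$-partition machinery of Theorem~\ref{Spartition}, that some fixed grid $[2|P|]^{d}$ tiles exactly by $P$; it then almost-tiles $2^{[n]}$ by copies of that grid using the chain decomposition of Theorem~\ref{chainpartition}, accepting a single constant-size block $D_{1,\dots,1}$ as the uncovered set. Because the residue is never required to be an interval, a singleton, or anything else in particular, no alignment problem arises. If you want to salvage your approach, the honest statement of what remains is precisely your Key Lemma, and you should expect its proof to require the full weight of the $t$-partition machinery rather than a reduction to Theorem~\ref{poset}.
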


In other words, Conjecture \ref{mainconjecture} asks if we can cover all but a constant number of the elements of $2^{[n]}$ with disjoint copies of $P$. Note that Conjecture \ref{loncconjecture} implies Conjecture \ref{mainconjecture} in the case $|P|$ is not divisible by $4$.

The main result of this manuscript is a proof of Conjecture \ref{mainconjecture} in the case $P$ has a unique maximal and minimal element. We believe that the method of proof presented here might lead to the proof of this conjecture in its full generality. We prove the following theorem.

\begin{customthm}{A}\label{mainthm}
	Let $P$ be a poset with a uniqe maximal and minimal element. There exists a constant $c=c(P)$ such that the following holds. Let $n$ be a positive integer, then there exists $S\subset 2^{[n]}$ such that $2^{[n]}\setminus S$ can be partitioned into copies of $P$, and $|S|\leq c$.
\end{customthm}

This paper is organized as follows. In Section \ref{prelim}, we introduce the main definitions and notation used throughout the paper, and outline the proof of Theorem \ref{mainthm}. In Section \ref{grid}, we prove a modification of Theorem \ref{poset}, where $2^{[n]}$ is replaced with an appropriately sized grid. In Section \ref{grid2}, we show that Conjecture \ref{mainconjecture} holds when $P$ is a grid. In Section \ref{finalproof}, we prove Theorem \ref{mainthm} and conclude with some discussion.

\section{Preliminaries}\label{prelim}

\subsection{Cartesian product of posets}
The proof of Theorem \ref{mainthm} relies heavily on the product structure of $2^{[n]}$. Let us define the \emph{cartesian product} of posets.
If $P_{1},...,P_{k}$ are  posets with partial orderings $\preceq_{1},...,\preceq_{k}$ respectively, the cartesian product $P_{1}\times...\times P_{k}$ is also a poset with partial ordering $\preceq$ such that $(p_{1},...,p_{k})\preceq(p_{1}',...,p_{k}')$ if $p_{i}\preceq_{i} p_{i}'$ for $i=1,...,k$. Also, $P^{k}=P\times...\times P$ is the $k$-th \emph{cartesian power} of $P$, where the cartesian product contains $k$ terms.

For every positive integer $m$, we shall view $[m]$ as a poset with the natural total ordering. For positive integers $a_{1},...,a_{d}$, the cartesian product $[a_{1}]\times...\times [a_{d}]$ is called a \emph{$d$-dimensional grid}; $2$-dimensional grids may be referred to as \emph{rectangles}. That is, if $\preceq$ is the partial ordering on $[a_{1}]\times...\times [a_{d}]$, then $\preceq$ is defined such that if $(x_{1},...,x_{d}),(y_{1},...,y_{d})\in [a_{1}]\times...\times [a_{d}]$, then $(x_{1},...,x_{d})\leq (y_{1},...,y_{d})$ iff $x_{i}\leq y_{i}$ for $i=1,...,d$. Note that the Boolean lattice $2^{[n]}$ is isomorphic to the $n$-dimensional grid $[2]^{n}$ and we might occasionally switch between the two notation without further comments.

\subsection{Outline of the proof}

The framework used in \cite{posettiling} for the proof of Theorem \ref{poset} can be easily modified to show that if $P$ has a unique maximal and minimal element, then Theorem \ref{poset} holds if $2^{[n]}$ is replaced with some appropriately sized grid. We prove the following theorem in Section \ref{grid}.

\begin{theorem}\label{gridpartitionthm}
	Let $P$ be a poset with a unique maximal and minimal element. If $d$ is sufficiently large, then $[2|P|]^{d}$ can be partitioned into copies of $P$.
\end{theorem}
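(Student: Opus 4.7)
The plan is to adapt the proof of Theorem \ref{poset} from \cite{posettiling}, which tiles the Boolean lattice $2^{[n]}=[2]^n$ under the extra divisibility constraint $|P|=2^k$, to the grid $[2m]^d$ with $m=|P|$. The conceptual point is that enlarging the coordinate length from $2$ to $2m$ provides enough room along every axis to fit copies of $P$, so the hypothesis $|P|=2^k$ is no longer needed.

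First I would make a routine reduction: it suffices to exhibit a single $d_0=d_0(P)$ such that $[2m]^{d_0}$ admits a partition into copies of $P$. Indeed, for any $d\geq d_0$, write $[2m]^d=[2m]^{d_0}\times[2m]^{d-d_0}$, and for each $x\in[2m]^{d-d_0}$ use the given partition of $[2m]^{d_0}\times\{x\}$; the union over $x$ is then a partition of $[2m]^d$ into copies of $P$. So the remaining task is to produce some $d_0$ for which the tiling exists.

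To build such a $d_0$, I would follow the scheme of \cite{posettiling}. Fix the downset order-embedding $\iota\colon P\hookrightarrow [2]^m\subseteq [2m]^m$ given by $p\mapsto\{q\in P:q\preceq p\}$. The idea is to take $d_0$ large and divisible by $m$, view $[2m]^{d_0}=([2m]^m)^{d_0/m}$, and tile each $[2m]^m$-factor inductively, using the outer product factors as parameter space to balance counts. A key intermediate step would be a local assembly lemma of the flavor: for some small auxiliary grid $R$, the product $R\times P$ can be partitioned into copies of $P$; this plays the role that ``$P\times[2]$ partitions into two copies of $P$'' plays in the Boolean case.

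The main obstacle is the re-assembly bookkeeping. In \cite{posettiling} the reassembly uses the divisibility $|P|=2^k$ at every recursive step so that partition sizes remain integral. For general $|P|=m$, although the overall count $(2m)^{d_0}$ is automatically divisible by $m$, each intermediate regrouping must also consume a number of cells divisible by $m$. I expect the technical heart of the proof to be choosing $d_0$ and the recursion parameters so that this divisibility propagates through every level, parallel to the role played by ``$n$ sufficiently large'' in Theorem \ref{poset}; once this is set up, the grid version should follow with no further essential difficulty.
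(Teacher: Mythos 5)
Your proposal correctly identifies the right starting point (adapt the machinery of \cite{posettiling}) and the routine reduction to a single $d_{0}$ is fine, but it leaves a genuine gap: the entire technical content of the theorem is deferred to the sentence ``I expect the technical heart of the proof to be choosing $d_{0}$ and the recursion parameters so that this divisibility propagates through every level.'' That expectation is precisely what needs to be proved, and the mechanism you suggest for it is not the one that works. The engine of \cite{posettiling} is not a direct recursive reassembly with an ``$R\times P$ partitions into copies of $P$'' lemma; it is the Gruslys--Leader--Tan theorem (Theorem \ref{Spartition} in this paper): if the family $\mathcal{F}$ of copies of $P$ inside a fixed base poset $S$ admits both a $t$-partition and a $(1\bmod t)$-partition, then $S^{n}$ genuinely partitions into copies of $P$ for large $n$. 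Your proposal never engages with this fractional/modular relaxation, so it has no route to the conclusion other than reproving that theorem from scratch.

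Moreover, your stated ``conceptual point'' --- that widening each axis from $2$ to $2|P|$ provides room to fit copies of $P$ along every axis --- is not where the hypothesis $|P|=2^{k}$ is actually discharged. In the paper the $t$-partition for $[2|P|]^{m}$ is inherited for free from the $[2]^{m}$ case by tiling $[2|P|]^{m}$ with copies of $[2]^{m}$; the point of the enlargement is arithmetic, not geometric: $|[2|P|]^{m}|=(2|P|)^{m}$ is divisible by $|P|$ for \emph{every} $P$, which is exactly what makes $s=t|Q|/|P|$ an integer in the final step of the $(1\bmod t)$-partition argument (Theorem \ref{modtpartition}), whereas $|2^{[n]}|/|P|$ is an integer only when $|P|$ is a power of $2$. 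There is also a second new idea you would need: to realize $I_{\{\mathbf{x}\}}-I_{\{\mathbf{a}\}}$ modulo $t$ for an arbitrary $\mathbf{x}$ in the grid, the paper must split into two cases (few versus many coordinates equal to $1$), swapping the top of a copy of $P$ located in a bottom corner or the bottom of a copy located in a top corner, respectively; nothing in your outline produces this swapping argument. As written, the proposal is a plausible plan of attack but not a proof.
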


Now our task is reduced to showing that Theorem \ref{mainthm} holds when $P$ is a grid. The new ingredient in our paper is the following result, which shall be proved in Section \ref{grid2}.

\begin{theorem}\label{gridthm}
	Let $P$ be a $d$-dimensional grid. If $n=\Omega(d|P|^{4})$, then there exists $S\subset 2^{[n]}$ such that $2^{[n]}\setminus S$ can be partitioned into copies of $P$ and $|S|\leq (24|P|^{2})^{d}$.
\end{theorem}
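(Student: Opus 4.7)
The plan is to exploit the product structure of $2^{[n]}$ together with Theorem~\ref{chainpartition}. Split $[n] = I_1 \sqcup \cdots \sqcup I_d$ into $d$ blocks with each $|I_j| = \Omega(|P|^4)$, and in each $2^{I_j}$ apply Theorem~\ref{chainpartition} with chain length $a_j$ to obtain a chain decomposition $C_{j,1}, \ldots, C_{j,r_j}$ where $|C_{j,1}| = b_j \in [a_j, 2a_j)$ is the unique exceptional chain and $|C_{j,k}| = a_j$ for $k \geq 2$. Via $2^{[n]} \cong \prod_j 2^{I_j}$ these chain partitions induce a partition of $2^{[n]}$ into product cells $\prod_j C_{j,k_j}$, and any cell with $k_j \geq 2$ for all $j$ is automatically isomorphic to $P$; this accounts for the bulk of the tiling.

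The remaining work concerns the bad cells, those involving at least one exceptional chain. I would group them by their exceptional set $T = \{j : k_j = 1\}$: the union of cells with exceptional set $T$ is $U_T = \prod_{j \in T}C_{j,1} \times \prod_{j \notin T}(2^{I_j}\setminus C_{j,1})$, and $2^{[n]} = \bigsqcup_T U_T$. The extremal region $U_{[d]} = \prod_j C_{j,1}$ has $\prod_j b_j \leq (2|P|)^d$ elements, comfortably within the allowed bound, so I would simply earmark it as the leftover set $S$. For each proper non-empty $T$, write $U_T = A_T \times B_T$ with $A_T = \prod_{j \in T}C_{j,1}$ and $B_T = \prod_{j \notin T}(2^{I_j}\setminus C_{j,1})$, where $B_T$ is already partitioned by its clean chains into $\prod_{j \notin T}(r_j-1)$ copies of $\prod_{j \notin T}[a_j]$. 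Splitting $A_T = A_T^{\mathrm{bot}} \sqcup A_T^{\mathrm{top}}$, where $A_T^{\mathrm{bot}} \cong \prod_{j \in T}[a_j]$ uses the bottom $a_j$ elements of each $C_{j,1}$, the region $A_T^{\mathrm{bot}} \times B_T$ tiles cleanly into copies of $P$, leaving the residue $A_T^{\mathrm{top}} \times B_T$ to handle.

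The crux is to tile these residues by $P$-copies without leaving too much leftover, and this is where I expect the main obstacle. A $P$-copy in $2^{[n]}$ is just a choice of one $a_j$-chain in each $2^{I_j}$, so it need not respect the initial chain partition: in each $j \in T$ one can take an $a_j$-sub-chain of $C_{j,1}$ that covers the top-$c_j$ excess (where $c_j = b_j - a_j < a_j$), while in each $j \notin T$ one uses a clean length-$a_j$ chain from the $B_T$ partition. This absorbs the excess at the cost of repurposing some clean cells, and the bookkeeping must ensure that the clean cells consumed across different $T$'s do not collide and that the total leftover stays bounded. I expect the $(24|P|^2)^d$ bound to arise by summing over the $2^d$ choices of $T$ and tracking a factor of $O(|P|^2)$ per dimension, with the $n = \Omega(d|P|^4)$ hypothesis giving each block enough room both for Theorem~\ref{chainpartition} (which itself needs $\Omega(|P|^2)$) and for the additional slack required by the repurposing step.
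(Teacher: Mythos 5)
Your opening move---writing $2^{[n]}\cong 2^{[m_1]}\times\cdots\times 2^{[m_d]}$ and taking the product of chain decompositions supplied by Theorem \ref{chainpartition}---is also the paper's first step, but your choice of chain length is where the argument breaks. By taking chains of length $a_j$ in the $j$-th factor you make the clean cells literal copies of $P$, at the price of pushing all of the difficulty into the regions $U_T$ for $T$ proper and nonempty. These regions grow with $n$ (there are $\prod_{j\notin T}(r_j-1)$ cells with exceptional set $T$), and in general $|U_T|=\prod_{j\in T}b_j\cdot\prod_{j\notin T}(r_j-1)a_j$ is not even divisible by $|P|$ (take $P=[2]\times[3]$, $b_1=3$, $T=\{1\}$, $r_2$ odd), so you cannot reserve only $U_{[d]}$ as the leftover set: you must almost-tile each $U_T$, and keeping those leftovers bounded by a constant is the whole problem. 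Your absorption mechanism cannot do this. For $d=2$ and $T=\{1\}$, each column $C_{1,1}\times C_{2,k}$ with $k\geq 2$ has first factor a chain of length $b_1<2a_1$, so it accommodates at most one disjoint $a_1$-subchain of $C_{1,1}$, leaving $(b_1-a_1)a_2$ elements uncovered per column and hence $\Theta(r_2)$ uncovered elements overall. To beat this you would need copies of $P$ whose factors in the coordinates outside $T$ are $a_j$-chains of $2^{[m_j]}$ that do not respect the chain decomposition at all, and you give no mechanism for organizing that; this is precisely the ``main obstacle'' you flag but do not resolve.

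The paper's proof resolves it by a different choice at the outset: all chains are taken to have the \emph{same} length $h=12|P|^2$, which is a multiple of $2|P|$ and much larger than every side of $P$ (the one exceptional chain per factor has length in $[h,2h]$). Then every cell of the product decomposition is a $d$-dimensional grid with all sides at least $12|P|^2$, and every cell except the single all-exceptional one has a side equal to $h$, hence divisible by $2|P|$. The heart of the proof is then Lemma \ref{gridintogrid} and Corollary \ref{gridcor}: a $d$-dimensional grid with all sides at least $12|P|^2$ and at least one side divisible by $2|P|$ can be partitioned \emph{exactly} into copies of $P$. This is a genuinely nontrivial tiling statement---already its two-dimensional case, Claim \ref{rectangle}, needs a careful switching argument---and with it every cell but one is tiled perfectly, leaving only $B_{1,\ldots,1}$ of size at most $(2h)^d=(24|P|^2)^d$. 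Some such exact-tiling lemma for the mixed regions is the missing ingredient in your proposal; without it the deferred bookkeeping cannot be completed.
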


These two theorems combined immediately yield our main result, see Section \ref{finalproof}.

\section{Partitioning the grid}\label{grid}

In this section, we prove Theorem \ref{gridpartitionthm}. The proof of this theorem follows the same ideas as the proof of Theorem \ref{poset} in \cite{posettiling}, with slight modifications. We shall reuse some of the results of \cite{posettiling} in order to shorten and simplify this manuscript. 

 \subsection{Weak partitions}
 
 Let $S$ be a finite set and let $\mathcal{F}$ be a family of subsets of $S$. We call a function $w:\mathcal{F}\rightarrow \mathbb{N}$ a \emph{weight function}. If $x\in S$, the \emph{weight of $x$} is the total weight of the sets in $\mathcal{F}$ containing $x$.
 
 Let $t$ be a positive integer. The family $\mathcal{F}$ \emph{contains a $t$-partition}, if there exists a weight function on $\mathcal{F}$ such that the weight of each $x\in S$ is $t$. Similarly, $\mathcal{F}$ \emph{contains a $(1\mod t)$-partition}, if there exists a weight function on $\mathcal{F}$ such that the weight of each $x\in S$ equals to $1\mod t$. 
 
 If $\mathcal{F}$ contains a $1$-partition, then the family of sets having weight $1$ in $\mathcal{F}$ is a partition of $S$ in the usual sense. Also, if $\mathcal{F}$ has a $1$-partition, it trivially has a $t$-partition and a ${(1\mod t)}$-partition as well for every positive integer $t$. 
 
 A remarkable result of Gruslys, Leader and Tan \cite{tiling} is that the existence of a $t$-partition and a ${(1\mod t)}$-partition also implies the existence of a $1$-partition of $S^{n}$, for some sufficiently large $n$, into sets that "look like" elements of $\mathcal{F}$. Let us define this precisely.
 
 For a positive integer $n$, define $\mathcal{F}(n)$ to be the family of all subsets of $S^{n}$ of the form $$\{s_{1}\}\times...\times \{s_{i-1}\}\times F\times \{s_{i+1}\}\times...\times \{s_{n}\},$$
 where $i\in [n]$, $s_{1},...,s_{i-1},s_{i+1},...,s_{n}\in S$ and $F\in \mathcal{F}$. The following result played a key role in both \cite{tiling,posettiling}, and shall play an important role in this paper as well. It was stated explicitly in \cite{posettiling} as Theorem 5.
 
 \begin{theorem}(\cite{posettiling})\label{Spartition}
 	Let $S$ be a finite set, $\mathcal{F}$ be a family of subsets of $S$ and $t\in \mathbb{N}$. Suppose that $\mathcal{F}$ contains a $t$-partition and a $(1\mod t)$-partition. Then for $n$ sufficiently large, $S^{n}$ can be partitioned into elements of $\mathcal{F}(n)$.
 \end{theorem}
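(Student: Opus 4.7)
The plan is to produce, for $n$ sufficiently large, a non-negative integer weight function $w\colon\mathcal{F}(n)\to\mathbb{N}$ that assigns total weight exactly $1$ to every point of $S^{n}$. Since the weights are non-negative integers, any such $w$ is automatically supported on pairwise disjoint members of $\mathcal{F}(n)$ whose union is $S^{n}$, which is precisely the desired partition. Thus the theorem reduces to a weighted existence statement that can be attacked by carefully constructing such a $w$.

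First I would observe that the two hypotheses lift very flexibly to $\mathcal{F}(n)$. For any coordinate $i\in[n]$, any weight function $\phi\colon\mathcal{F}\to\mathbb{N}$, and any ``multiplier'' $\alpha\colon S^{n-1}\to\mathbb{N}$, the assignment that gives $\{s_{1}\}\times\cdots\times F\times\cdots\times\{s_{n}\}$ (with $F$ in coordinate $i$) the value $\alpha(s_{1},\dots,\widehat{s_{i}},\dots,s_{n})\,\phi(F)$ is a valid weight function on $\mathcal{F}_i(n)$ and contributes $\alpha(x_{-i})\,W_{\phi}(x_{i})$ to the total weight at $x\in S^{n}$, where $W_{\phi}(s):=\sum_{F\ni s}\phi(F)$. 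The two hypotheses translate into $W_{w_{t}}(s)=t$ for all $s$, and $W_{w_{*}}(s)=1+c_{s}t$ with non-negative integers $c_{s}$ bounded by some constant $C$ depending only on $\mathcal{F}$ and $t$. Summing $\mathbb{N}$-linear combinations of such contributions across different choices of $i,\phi,\alpha$ gives me a very large family of candidate weight functions on $\mathcal{F}(n)$, and my task is to realise the constant $1$ at every $x$ as one of them.

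Next I would carry out a two-phase construction. In the first phase, applying $w_{t}$ on a single coordinate with constant multiplier $\alpha\equiv k$ already yields a uniform $kt$-partition of $S^{n}$, so every sufficiently large multiple of $t$ is attainable as a constant weight on $S^n$. In the second phase, I would introduce contributions from $w_{*}$, possibly on several coordinates with carefully tuned multipliers, in order to shift the residue class from $0$ to $1$ modulo $t$; the delicate point is that a single application of $w_{*}$ on coordinate $i$ produces a \emph{non-uniform} excess $c_{x_i}t$. The idea is to compensate for this excess by corresponding $t$-partition contributions built from the other coordinates, with the multipliers $\alpha$ tailored so that the excess at each point $x$ is exactly matched. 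In high dimension there are many coordinates available to absorb the variation of $c_{s}$ across $s\in S$, and a symmetrisation argument across all $n$ coordinates should be enough to make the total cancellation uniform.

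The main obstacle I anticipate is precisely this cancellation step: bridging the gap between ``total weight $\equiv 1\pmod t$ at every point'' and ``total weight exactly $1$ at every point'' using only non-negative integer weights. In effect, I have to solve a large system of linear equations in non-negative integer unknowns, and the existence of a solution is only plausible once $n$ is so large that every point of $S^{n}$ lies in many members of $\mathcal{F}(n)$ coming from many coordinate directions. I expect a Frobenius/Chicken-McNugget-style reasoning to settle it: the semigroup generated by $t$ and by the values $1+c_s t$ contains all sufficiently large integers, and by routing these representations through the product structure of $S^n$ one can force the totals to be exactly $1$ at every point simultaneously. Producing quantitative bounds on how large $n$ must be, in terms of $|S|$, $|\mathcal{F}|$ and $t$, is the technical heart of the argument and the step where I expect to spend most of the effort.
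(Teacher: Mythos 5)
This statement is not proved in the present paper at all: it is imported verbatim from the earlier work of Gruslys, Leader and Tomon (itself building on the tiling argument of Gruslys, Leader and Tan), so the comparison below is against that cited proof rather than against anything in this manuscript. Your reduction of the theorem to the existence of a weight function $w:\mathcal{F}(n)\to\mathbb{N}$ with total weight exactly $1$ at every point is correct, and your observation that weightings of $\mathcal{F}$ lift coordinatewise with multipliers is also correct (indeed every weight function on $\mathcal{F}(n)$ decomposes this way). But the proposal stops exactly where the theorem begins, and the specific construction you sketch cannot work.

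The obstruction is non-negativity, which you name but do not overcome. Since the target weight at every point is exactly $1$ and all contributions are non-negative integers, at each point of $S^{n}$ exactly one constituent may contribute, and it must contribute exactly $1$; nothing can ever be subtracted. Your phase one, a constant-multiplier lift of the $t$-partition, gives every point weight $kt\geq t$, which for $t\geq 2$ already exceeds $1$ and can never be brought back down; likewise, "compensating" the non-uniform excess $c_{x_i}t$ of a lifted $(1\bmod t)$-partition by adding further $t$-partition weight on other coordinates only raises the uniform total to $1+Ct$, not to $1$. A Frobenius-type argument is of no help here: numerical semigroups tell you which large totals are reachable, whereas you need the total to be the small number $1$. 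The cited proof proceeds quite differently: it is an induction on $t$ in which one shows that a $t$-partition together with a $(1\bmod t)$-partition of $S^{m}$ can be combined, using the product structure of $S^{m}\times S^{m}$ in an essential (non-superposition) way, to manufacture a $t'$-partition and a $(1\bmod t')$-partition of a higher power of $S$ with $t'<t$ (roughly halving $t$); iterating drives $t$ down to $1$, and a $1$-partition is an exact partition. That descent on $t$ is the missing idea, and without it the proposal has a genuine gap rather than a technical loose end.
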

 
 Let us say a few remarks about this theorem. Note that  if $S$ is endowed with a partial ordering and $\mathcal{F}$ is the family of all copies of some poset $P$, then $\mathcal{F}(n)$ is also a family of copies of $P$ in $S^{n}$. (However, it is not true that every copy of $P$ in $S^{n}$ is an element of $\mathcal{F}(n)$.) Also, at first glance it might not be clear why is it easier to find a $t$-partition or a $(1\mod t)$-partition than a $1$-partition, but as we shall see in the next section, this problem is substantially simpler.
 
 \subsection{$t$ and $(1\mod t)$-partitions}
 
 In order to prove Theorem \ref{gridpartitionthm}, we only need to show that the family of copies of $P$ in $[2|P|]^{m}$ contains a $t$-partition and a $(1\mod t)$-partition for some $m,t$ positive integers.

 \begin{theorem}\label{tpartition}
 	Let $P$ be a poset with a unique maximal and minimal element. Then there exist positive integers $m$ and $t$ such that the family of copies of $P$ in $[2|P|]^{m}$ contains a $t$-partition.
 \end{theorem}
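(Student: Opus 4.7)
I would adapt the symmetrization framework underlying the proof of Theorem \ref{poset} in \cite{posettiling} to the grid setting. The starting point exploits the unique max/min assumption. Since $P$ has a unique $\hat 0$ and $\hat 1$, one can fix an order-embedding $\phi_0\colon P \hookrightarrow [|P|]^d$, where $d = \dim P$ is the order dimension of $P$, such that $\phi_0(\hat 0) = (1,\ldots,1)$ and $\phi_0(\hat 1) = (|P|,\ldots,|P|)$. Viewing $\phi_0(P)$ inside $[|P|]^d \subset [2|P|]^d$, every translate $\phi_0(P) + s$ with $s \in \{0,1,\ldots,|P|\}^d$ is again a copy of $P$, which is where the choice of grid side $2|P|$ buys us translational room.

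Next, choose $m$ sufficiently large and pad each base copy to $[2|P|]^m$ by fixing the last $m - d$ coordinates to arbitrary values. For every coordinate permutation $\sigma \in S_m$, every translate $s$, every padding pattern, and every base embedding in a well-chosen family of embeddings all anchored at $(1,\ldots,1)$ and $(|P|,\ldots,|P|)$, one obtains a copy of $P$ in $[2|P|]^m$; note that permutations of coordinates preserve the coordinatewise order on $[2|P|]^m$, so $S_m$ acts on the family of copies. Summing the indicator functions of all these copies produces a weight function that is $S_m$-invariant, so its value at a point $x$ depends only on the multiset of coordinates of $x$.

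To promote this partial uniformity into a truly constant weight, I would take a non-negative rational linear combination of the $S_m$-symmetric weights associated with different base embeddings, translates, and padding patterns. The main claim is that the constant function on $[2|P|]^m$ lies in the non-negative rational cone spanned by these symmetric weights. Once this claim is established, clearing denominators produces an integer-valued weight function $w$ on copies of $P$ under which every element of $[2|P|]^m$ is covered exactly $t$ times, for some positive integer $t$, as required.

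The main obstacle is precisely this spanning claim. It asks us to control the finitely many possible multiset types of coordinates of points of $[2|P|]^m$ and to show that the symmetric weights provide enough flexibility to equalize coverage across types. The unique max/min hypothesis is crucial here: anchoring every embedding in the family at the same two corners, while freely reshaping the interior, yields a rich parameter family that should produce enough linearly independent weight contributions. I expect the technical heart of the argument to parallel the analogous step in \cite{posettiling}, with $S_m$-orbits in $[2|P|]^m$ (indexed by multiset types of coordinates) playing the role that the rank levels play in the Boolean lattice setting.
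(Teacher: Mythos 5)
Your proposal does not prove the theorem: the entire content of the statement is concentrated in what you call the ``spanning claim'' --- that the constant function on $[2|P|]^{m}$ lies in the non-negative rational cone generated by the $S_{m}$-symmetrized weights --- and you leave this claim unproved, merely expressing the expectation that it parallels the corresponding step in \cite{posettiling}. This is not a routine verification. The $S_{m}$-orbits of $[2|P|]^{m}$ are indexed by multisets of coordinate values, a collection of size polynomial of degree $2|P|-1$ in $m$, rather than the $m+1$ levels of $2^{[m]}$; and since every base embedding in your family is anchored at the two corners, a point such as $(1,\ldots,1)$ is covered only by copies whose minimal element sits exactly there, whereas generic points are covered by many copies in many different roles. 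It is not clear that non-negative combinations of your symmetric weights can equalize these counts, and the ``enough linearly independent weight contributions'' your last paragraph gestures at would in any case only place the constant function in the linear span, not in the non-negative cone, which is what a weight function $w:\mathcal{F}\rightarrow\mathbb{N}$ requires. Without an argument here you have restated the problem, not solved it.

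The paper sidesteps all of this with a two-line reduction: Lemma 3 of \cite{posettiling} already supplies $m$ and $t$ such that the family of copies of $P$ in $[2]^{m}\cong 2^{[m]}$ contains a $t$-partition, and $[2|P|]^{m}$ partitions trivially into $|P|^{m}$ translated copies of $[2]^{m}$ (the blocks $B_{i_{1},\ldots,i_{m}}$), so one simply installs that $t$-partition inside each block. If you want to salvage your approach you would have to prove the cone claim from scratch, which amounts to reproving a harder, larger-alphabet version of a result that can be imported and localized to subcubes for free.
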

 
 \begin{proof}
 	In \cite{posettiling}, Lemma 3 states that there exist $m$ and $t$ such that the family of copies of $P$ in $[2]^{m}$ contains a $t$-partition. However, $[2|P|]^{m}$ can be trivially partitioned into copies of $[2]^{m}$: for $(i_{1},...,i_{m})\in [|P|]^{m}$, let 
 	$$B_{i_{1},...,i_{m}}=\{(2i_{1}+\epsilon_{1}-2,...,2i_{m}+\epsilon_{m}-2):(\epsilon_{1},...,\epsilon_{m})\in [2]^{m}\}.$$
 	Then $B_{i_{1},...,i_{m}}$ is isomorphic to $[2]^{m}$ and the family of these sets form a partition of $[2|P|]^{m}$.
 \end{proof}
 
 \begin{theorem}\label{modtpartition}
 	Let $P$ be a poset with a unique maximal and minimal element and let $t$ be a positive integer. Then there exists a positive integer $m$ such that the family of copies of $P$ in $[2|P|]^{m}$ contains a $(1\mod t)$-partition.
 \end{theorem}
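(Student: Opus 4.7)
The plan is to mirror the proof of Theorem~\ref{tpartition}: first establish a $(1 \mod t)$-partition on the smaller cube $[2]^{m_{0}}$ (using a lemma from \cite{posettiling}), and then transfer it to $[2|P|]^{m_{0}}$ via exactly the same block decomposition that was used for the $t$-partition. The first step is to invoke the $(1 \mod t)$-analogue of Lemma~3 of \cite{posettiling}: for any poset $P$ with a unique maximal and minimal element and any positive integer $t$, there exists $m_{0}$ such that the family of copies of $P$ in $[2]^{m_{0}}$ admits a $(1 \mod t)$-partition. The $|P|=2^{k}$ hypothesis appearing in the main theorem of \cite{posettiling} is needed only at the final assembly stage (in order to obtain an exact $1$-partition of $2^{[n]}$) and should not be required for the weaker statement about $[2]^{m_{0}}$.

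For the second step, I would reuse the block decomposition from Theorem~\ref{tpartition}: for each $(i_{1},\ldots,i_{m_{0}})\in[|P|]^{m_{0}}$ set
\[
B_{i_{1},\ldots,i_{m_{0}}}=\{(2i_{1}+\epsilon_{1}-2,\ldots,2i_{m_{0}}+\epsilon_{m_{0}}-2):(\epsilon_{1},\ldots,\epsilon_{m_{0}})\in[2]^{m_{0}}\}.
\]
These blocks are pairwise disjoint, each one is isomorphic to $[2]^{m_{0}}$, and their union is all of $[2|P|]^{m_{0}}$. Applying the $(1 \mod t)$-partition from Step~1 inside every block gives a weight function on copies of $P$ in $[2|P|]^{m_{0}}$. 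Since each point of $[2|P|]^{m_{0}}$ lies in exactly one block, its total weight equals its weight inside that block, namely $\equiv 1 \mod t$. Hence $m=m_{0}$ works.

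The main anticipated obstacle is Step~1, i.e.\ obtaining the $(1 \mod t)$-partition in $[2]^{m_{0}}$ without assuming $|P|=2^{k}$. If the corresponding lemma in \cite{posettiling} is stated only under that hypothesis, I would prove a general version directly. The strategy is to start from a $t$-partition in $[2]^{m_{0}}$ (the $[2]^{m_{0}}$-analogue of Theorem~\ref{tpartition}), which places weight $t\equiv 0\pmod{t}$ at every point, and then build a correction weight function that adds exactly $1\pmod{t}$ uniformly at every point. By averaging over the natural $S_{m_{0}}$-action permuting coordinates one can restrict to weight functions depending only on the level $|x|$ of a point, reducing the problem to a finite linear system modulo $t$; solvability of this system should follow from Frobenius/coin-type arguments once $m_{0}$ is chosen large enough that copies of $P$ sitting at sufficiently many distinct levels are available. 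Combining this correction with the $t$-partition then yields the required $(1\mod t)$-partition.
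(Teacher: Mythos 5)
Your Step 2 (tiling $[2|P|]^{m_0}$ by blocks isomorphic to $[2]^{m_0}$ and applying a $(1\bmod t)$-partition inside each block) is sound, but Step 1 is where the proposal breaks down: the claimed $(1\bmod t)$-analogue of Lemma 3 of \cite{posettiling} in the cube $[2]^{m_0}$ is \emph{false} for general $P$, and the hypothesis $|P|=2^{k}$ is needed precisely at that step, not only ``at the final assembly stage.'' There is a counting obstruction: if $w$ is any weight function on the copies of $P$ in $[2]^{m_0}$ giving every point weight $\equiv 1 \pmod t$, then summing the weights over all points yields $|P|\sum_{F}w(F)\equiv 2^{m_0}\pmod t$, which forces $\gcd(|P|,t)\mid 2^{m_0}$. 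Thus if, say, $P$ is a chain of size $3$ and $t=3$, no $(1\bmod 3)$-partition of $[2]^{m_0}$ by copies of $P$ exists for any $m_0$. Since in Theorem \ref{modtpartition} the integer $t$ is an arbitrary input (in the application it is the output of Theorem \ref{tpartition}, over which you have no control), this case cannot be avoided. The same obstruction defeats your fallback plan: a $t$-partition combined with a uniform $+1\pmod t$ correction would again be a $(1\bmod t)$-partition of $[2]^{m_0}$, so no such correction (symmetrized over $S_{m_0}$ or not) can exist when $\gcd(|P|,t)$ has an odd prime factor.

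This divisibility issue is exactly why the theorem is stated over the alphabet $[2|P|]$ rather than $[2]$: the ground set then has size $(2|P|)^{m}$, which is divisible by $|P|$, and the obstruction vanishes. Accordingly, the paper's proof works directly inside $Q=[2|P|]^{m}$ with $m=2d-1$, where $2^{[d]}$ contains a copy of $P$. It calls a function $f:Q\to\mathbb{Z}$ realizable if it agrees mod $t$ with the weight profile of some weight function on copies of $P$; it shows $I_{\{\mathbf{x}\}}-I_{\{\mathbf{a}\}}$ is realizable for every $\mathbf{x}$ by finding a copy of $P$ in a subcube lying entirely below (or entirely above) both $\mathbf{x}$ and the reference point $\mathbf{a}$ and swapping its maximal (or minimal) element --- this is where the extra room in $[2|P|]$ is used; it deduces that every $f$ with $\sum_{\mathbf{x}}f(\mathbf{x})\equiv 0\pmod t$ is realizable; and it concludes by writing $I_{Q}$ as such a function plus a nonnegative multiple of $I_{A}$ for a single copy $A$ of $P$, using that $|P|$ divides $|Q|$. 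Any repair of your approach would have to reproduce an argument of this kind in $[2|P|]^{m_0}$ anyway, so nothing is gained by first passing through $[2]^{m_0}$.
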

 
 \begin{proof}
 	If $|P|=1$, the proof is trivial, so we can suppose that $|P|\geq 2$. 
 	
 	Let $d$ be a positive integer such that $2^{[d]}$ contains a copy of $P$. We show that $m=2d-1$ suffices. For simplicity, write $Q=[2|P|]^{m}$ and let $\preceq$ be the partial ordering on $Q$. 
 	
 	Let $\mathcal{F}$ be the family of copies of $P$ in $Q$. Also, for a subset $A\subset Q$, let $I_{A}:Q\rightarrow \mathbb{N}$ be the indicator function of $A$.
 	
 	 Say that a function  $f:Q\rightarrow \mathbb{Z}$ is \emph{realizable} if there exists a weight function $w:\mathcal{F}\rightarrow \mathbb{N}$ such that $$f(x)\equiv\sum_{\substack{F\in\mathcal{F}\\x\in F}} w(F)\mod t$$
 	for all $x\in Q$.  Note that if $f$ and $g$ are realizable, then both $f+g$ and $f-g$ are realizable, as $f-g\equiv f+(t-1)g\mod t$.
 	
 	Our task is to show that $I_{Q}$ is realizable. 
 	
 	Let $\mathbf{a}$ be the element of $Q$ whose every coordinate is $2$. 
 	\begin{claim}\label{twoelementsclaim}
 	 For every $\mathbf{x}\in Q$, the function $I_{\{\mathbf{x}\}}-I_{\{\mathbf{a}\}}$ is realizable.	
 	\end{claim} 
 
     \begin{proof}
 
 Consider two cases according to the number of $1$ coordinates of $\mathbf{x}$.
 	
 	Case 1: The number of coordinates of $\mathbf{x}$ equal to $1$ is at most $d-1$. Let $J$ be a set of $d$ indices $j\in [m]$ such that $\mathbf{x}(j)\geq 2$. Let 
 	$$R=\{(a_{1},...,a_{m})\in [2]^{m}: a_{j}=1\mbox{, if }j\not\in J\}.$$
 	Then $R$ is isomorphic to $2^{[d]}$, so it contains a copy of $P$. Let such a copy be $A\subset R$. If $\mathbf{b}$ is the maximal element of $A$, then $\mathbf{b}\preceq \mathbf{x}$ and $\mathbf{b}\preceq \mathbf{a}$. Hence, $A_{1}=(A\setminus\{\mathbf{b}\})\cup\{\mathbf{x}\}$ and $A_{2}=(A\setminus\{\mathbf{b}\})\cup\{\mathbf{a}\}$ are both copies of $P$, which implies that the function
 	$$I_{A_{1}}-I_{A_{2}}=I_{\{\mathbf{x}\}}-I_{\{\mathbf{a}\}}$$
 	is realizable.
 	
 	Case 2: The number of coordinates of $\mathbf{x}$ equal to $1$ is at least $d$. In a similar way as in the previous case, we show that $I_{\{\mathbf{x}\}}-I_{\{\mathbf{a}\}}$ is realizable.  Let $J$ be a set of $d$ indices $j\in [m]$ such that $\mathbf{x}(j)= 1$. Let 
 	$$R=\{(a_{1},...,a_{m})\in \{2|P|-1,2|P|\}^{m}: a_{j}=2|P|\mbox{, if }j\not\in J\}.$$
 	Then $R$ is isomorphic to $2^{[d]}$, so it contains a copy of $P$. Let such a copy be $A\subset R$. If $\mathbf{c}$ is the minimal element of $A$, then $\mathbf{x}\preceq \mathbf{c}$ and $\mathbf{a}\preceq \mathbf{c}$. Hence, $A_{1}=(A\setminus\{\mathbf{c}\})\cup\{\mathbf{x}\}$ and $A_{2}=(A\setminus\{\mathbf{c}\})\cup\{\mathbf{a}\}$ are both copies of $P$, which implies that the function
 	$$I_{A_{1}}-I_{A_{2}}=I_{\{\mathbf{x}\}}-I_{\{\mathbf{a}\}}$$
 	is realizable.
 \end{proof}

  The previous claim immediately yields the following claim.
  
  \begin{claim}\label{sum0claim}
  	Let $f:Q\rightarrow \mathbb{Z}$ be a function such that
  	$$\sum_{\mathbf{x}\in Q}f(\mathbf{x})\equiv 0\mod t.$$
  	Then $f$ is realizable.
  \end{claim}

\begin{proof}
    Define the function $g:Q\rightarrow \mathbb{Z}$ such that
	$$g=\sum_{\mathbf{x}\in Q}f(\mathbf{x})(I_{\{\mathbf{x}\}}-I_{\{\mathbf{a}\}}).$$
	Clearly, $g$ is realizable as every term in the sum is realizable by Claim \ref{twoelementsclaim}. Also, $f(\mathbf{x})=g(\mathbf{x})$ for all $\mathbf{x}\neq \mathbf{a}$. But $g$ satisfies the equality $\sum_{\mathbf{x}\in Q}g(\mathbf{x})\equiv 0\mod t$ as well, so we must have $f(\mathbf{a})\equiv g(\mathbf{a})\mod t$. Hence, $f\equiv g\mod t$, which gives that $f$ is realizable.
\end{proof}

Now let $A\subset Q$ be a copy of $P$ and $s=t|Q|/|P|$. Note that $s$ is an integer. The function $sI_{A}$ is realizable and $f=I_{Q}-sI_{A}$ satisfies the equality $\sum_{\mathbf{x}\in Q}f(\mathbf{x})\equiv 0\mod t,$ so $f$ is also realizable by Claim \ref{sum0claim}. But then $I_{Q}=f+sI_{A}$ is realizable as well.
 \end{proof}

  Let us remark that if the family of copies of $P$ in $[2|P|]^{m_{0}}$ contains a $t$-partition or a $(1\mod t)$-partition, then the family of copies of $P$ in $[2|P|]^{m}$ also contains a $t$-partition or $(1\mod t)$-partition, respectively, for $m>m_{0}$, as  $[2|P|]^{m}$ can be trivially partitioned into copies of $[2|P|]^{m_{0}}$.

\subsection{Proof of Theorem \ref{gridpartitionthm}}

Now everything is set to prove that if $P$ has a unique maximal and minimal element, then $[2|P|]^{d}$ can be partitioned into copies of $P$ for $d$ sufficiently large.

\begin{proof}[Proof of Theorem \ref{gridpartitionthm}]
	By Theorem \ref{tpartition}, there exist positive integers $t$ and $m_{1}$ such that the family of copies of $P$ in $[2|P|]^{m_{1}}$ contains a $t$-partition. Also, by Theorem \ref{modtpartition}, there exists a positive integer $m_{2}$ such that the family of copies of $P$ in $[2|P|]^{m_{2}}$ contains a $(1\mod t)$-partition. Let $m=\max\{m_{1},m_{2}\}$, then the family of copies of $P$ in $[2|P|]^{m}$ contains both a $t$ and a $(1\mod t)$-partition. Hence, by Theorem \ref{Spartition}, there exists a positive integer $n$ such that $([2|P|]^{m})^{n}$ can be partitioned into copies of $P$. But then $d\geq mn$ suffices as $([2|P|]^{m})^{n}\cong [2|P|]^{mn}$.
	
\end{proof}

\section{Almost partitioning into grids}\label{grid2}

In this section, we prove Theorem \ref{gridthm}. Let us briefly outline our strategy for the proof of this theorem. Let $P$ be a $d$-dimensional grid. First, we use Theorem \ref{chainpartition} to partition $2^{[m]}$ (for sufficiently large $m$) into chains $C_{1},...,C_{r}$ such that all these chains are large (but still constant sized) and at most one of them, $C_{1}$ might have size not divisible by $|P|$. This chain partition induces a partition of $2^{[dm]}\simeq (2^{[m]})^{d}$ into the family of grids $D_{i_{1},...,i_{d}}\simeq C_{i_{1}}\times...\times C_{i_{d}}$, where $(i_{1},...,i_{d})\in [r]^{d}$. We conclude by showing that unless $i_{1}=...=i_{d}=1$, the grid $D_{i_{1},...,i_{d}}$ can be partitioned into copies of $P$, so the subset of elements which are uncovered by disjoint copies of $P$ is $D_{1,...,1}$, a set of constant size.

Now let us first show that as long as one side of a $d$-dimensional grid $G$ is divisible by $2|P|$, and all the sides of $G$ are large, we can partition $G$ into copies of $P$.

\begin{lemma}\label{gridintogrid}
	Let $P$ be a $d$-dimensional grid and let $c_{1},...,c_{d-1}$ be positive integers satisfying $c_{i}\geq 12|P|^{2}$ for $i\in [d-1]$. Then the $d$-dimensional grid $[2|P|]\times [c_{1}]\times...\times[c_{d-1}]$ can be partitioned into copies of $P$.
\end{lemma}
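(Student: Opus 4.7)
The plan is to proceed by induction on the dimension $d$ of $P$. The base case $d=1$ is trivial: writing $P = [a_1]$, the chain $[2a_1]$ decomposes into two consecutive copies of $[a_1]$.

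For the inductive step with $d \geq 2$, write $P = [a_1] \times \ldots \times [a_d]$ and exploit the key fact that $2|P| = 2 a_1 \cdots a_d$ is divisible by every side length $a_j$, so the first coordinate has great flexibility. My plan is to slice the first coordinate $[2|P|]$ into $2|P|/a_d = 2|P_d|$ consecutive intervals of length $a_d$, where $P_d := [a_1] \times \ldots \times [a_{d-1}]$. Each slab $[a_d] \times [c_1] \times \ldots \times [c_{d-1}]$ obtained this way can be tiled by placing copies of $P$ with the $a_d$-side aligned along the slab's first coordinate. This reduces the problem to tiling the base $[c_1] \times \ldots \times [c_{d-1}]$ by copies of the $(d-1)$-dimensional grid $P_d$, and the same base tiling is then lifted to all $2|P_d|$ slabs.

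To tile the base by $P_d$, I would partition $[c_1]$ into consecutive intervals of length $2|P_d|$ and apply the inductive hypothesis to each resulting subgrid $[2|P_d|] \times [c_2] \times \ldots \times [c_{d-1}]$. The hypothesis applies because $c_i \geq 12|P|^2 \geq 12|P_d|^2$ for every $i \geq 2$, so the size conditions propagate cleanly through the induction.

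The main obstacle is the residue $r_1 := c_1 \bmod 2|P_d|$, which yields a leftover subgrid $[r_1] \times [c_2] \times \ldots \times [c_{d-1}]$ whose first coordinate is too short to invoke the inductive hypothesis, and which, for certain $c_1$ incompatible with the $\gcd$ of $\{a_1,\ldots,a_d\}$, cannot be reached by contiguous brick copies alone. I expect to handle this by fusing the residue with an adjacent full block to form a combined region of length $2|P_d| + r_1$ and tiling that region explicitly using both brick-orientations and the non-contiguous sub-grid copies available in the poset setting. The generous lower bound $c_i \geq 12|P|^2$, which vastly exceeds the Frobenius number of $\{a_1, \ldots, a_d\}$, provides ample room for the necessary combinatorial adjustments; this absorption step is where the constant $12$ plays its role and is the technical heart of the argument.
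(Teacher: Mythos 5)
Your reduction has a structural flaw that already kills the argument in the smallest nontrivial case, and the step you defer as ``the technical heart'' is precisely the entire content of the lemma. Consider $d=2$ and $P=[a_1]\times[a_2]$. Your slicing of $[2|P|]$ into intervals of length $a_2$ reduces the problem to tiling the base $[c_1]$ by copies of $P_2=[a_1]$, with the same base tiling lifted to every slab. A copy of the chain $[a_1]$ inside the chain $[c_1]$ is just $a_1$ elements, so this requires $a_1\mid c_1$ --- but $c_1$ is an arbitrary integer with $c_1\geq 12|P|^2$ and need not satisfy any divisibility condition (e.g.\ $P=[2]\times[3]$, $c_1=433$). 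Your proposed fix, fusing the residue with one adjacent block of the base into a region of length $2|P_2|+r_1$ and tiling it by $P_2$, cannot work either: for $P_2=[2]$ and $r_1$ odd you would be tiling a chain of odd length by $2$-chains. The only way out is to let copies of $P$ in the exceptional region use the \emph{full} first coordinate $[2|P|]$ in a way that is not a product of a base tiling with a slab tiling --- i.e.\ the absorption must happen in the full-dimensional grid, and you give no construction for it. (Note also that a Frobenius-number heuristic is the wrong intuition here: for $P=[2]\times[2]$ and $c_1$ odd, no contiguous-brick tiling of $[8]\times[c_1]$ exists by a parity/coloring argument, so the non-contiguous poset copies are not a convenience but a necessity, and producing them is the whole difficulty.)

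For comparison, the paper never tries to tile the base alone. Its key step (Claim~\ref{rectangle}) works directly in two dimensions with the full side $[ab]$: it lays down a near-tiling of $[ab]\times[c]$ by contiguous $b\times a$ bricks in two vertically offset half-columns, leaving two small corner regions $S$ and $T$ uncovered; it then swaps the maximal elements of some bricks with elements of $T$ and the minimal elements of others with elements of $S$ (legal because those elements are comparable to everything in the relevant brick), and verifies that the $abr$ freed-up elements, which form a highly non-contiguous set, can themselves be grouped into copies of $[a]\times[b]$. The higher-dimensional statement is then obtained by merging coordinates ($a_1'=a_1a_2$) and inducting, and the evenness hypothesis is bought by first tiling with $[2a_1]\times[a_2]\times\cdots\times[a_d]$ and splitting each copy in two. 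If you want to salvage your outline, you would need to replace your base-times-slab factorization with an explicit two-dimensional absorption of this kind; as written, the proposal omits it entirely.
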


\begin{proof}
	Let us first prove our theorem for $d=2$, the general statement follows by induction on $d$.
	\begin{claim}\label{rectangle}
		Let $a,b,c$ be positive integers such that $c\geq a^{2}b+2a$ and $a$ is even. Then $[ab]\times [c]$ can be partitioned into copies of $[a]\times [b]$.
	\end{claim}

\begin{proof}
	Write $c=ad+r$, where $d,r$ are positive integers and $0\leq r<a$. If $r=0$, we can trivially partition $[ab]\times [c]$ into copies of $[a]\times [b]$. So let us assume that $r\geq 1$, and let 
	$$\epsilon= \begin{cases} 1 &\mbox{if } r=1 \\
		2 &\mbox{if } r\geq 2. \end{cases}$$
	
	  By the condition $c\geq a^{2}b+2a$, we have $d\geq br+2$. Let $R=[a]\times [b]$ and denote by $\preceq$ the partial ordering on $[ab]\times [c]$.
	
	 Consider the following two collections of copies of $R$ in $[ab]\times [c]$. For $(i,j)\in [a/2]\times [d]$, let 
	 $$A_{i,j}=\{(b(i-1)+x,r+a(j-1)+y:(x,y)\in [b]\times [a]\}$$
	 and
	 $$B_{i,j}=\{(ab/2+b(i-1)+x,a(j-1)+y:(x,y)\in [b]\times [a]\},$$
	 see Figure \ref{image}.
	 Clearly, the sets $A_{i,j}$ and $B_{i,j}$ are copies of $R$ and pairwise disjoint. Also, their union covers all elements of $[ab]\times [c]$, except for the elements of the sets $S=[ab/2]\times [r]$ and $T=[ab/2+1,ab]\times [ad+1,c]$. 
	 
	 Now we shall modify some of the sets $A_{i,j}$ by  switching their maximal elements with an element of $T$, and we shall switch the minimal elements of some of the sets $B_{i,j}$ with an element of $S$. We shall execute these switches in a way that the freed up elements of $[ab]\times [c]$ can be easily partitioned into copies of $R$.
	 
	 For $(i,j)\in [a/2]\times [br]$, let $x_{i,j}=(bi,r+aj)$, which is the maximal element of $A_{i,j}$, and let $y_{i,j}=(ab/2+b(i-1)+1,a(j+\epsilon-1)+1)$, which is the minimal element of $B_{i,j+\epsilon}$. We remark that we used the inequality $j+2\leq br+2\leq d$ to guarantee that $A_{i,j}$ and $B_{i,j+\epsilon}$ exist. Let $\phi$ be any bijection between $[a/2]\times [br]$ and $T$, and let $\varphi$ be a bijection between $[a/2]\times [br]$ and $S$.
	 
	 For $(i,j)\in [a/2]\times [br]$, let 
	 $$A'_{i,j}=(A_{i,j}\setminus \{x_{i,j}\})\cup\{\phi(i,j)\},$$
	 $$B'_{i,j+\epsilon}=(B_{i,j+\epsilon}\setminus \{y_{i,j}\})\cup\{\varphi(i,j)\}.$$
	 Also, for $(i,j)\in [a/2]\times [br+1,d]$, let $A'_{i,j}=A_{i,j}$, and for $(i,j)\in [a/2]\times([\epsilon]\cup [br+\epsilon+1,d])$, let $B'_{i,j}=B_{i,j}$. 
	 For $(i,j)\in [a/2]\times [br]$, the set $A_{i,j}$ is contained in $[ab/2]\times [abr+r]$. As $abr+r<ad+1$, we have that every element of $T$ is $\preceq$-larger than every element of $A_{i,j}$. Hence, $A'_{i,j}$ is also a copy of $R$, remembering that $x_{i,j}$ is the maximal element of $A_{i,j}$. Similarly, $B'_{i,j+\epsilon}$ is also a copy of $R$, as $B'_{i,j+\epsilon}$ is equal to $B_{i,j}$ with its minimal element replaced with a $\preceq$-smaller element.
	 For $(i,j)\in [a/2]\times [d]$, the sets $A'_{i,j}$ and $B'_{i,j}$ are disjoint copies of $R$ and their union covers all elements of $[ab]\times [c]$, except for the set $$X=\{x_{i,j}:(i,j)\in [a/2]\times [br]\}\cup\{y_{i,j}:(i,j)\in [a/2]\times [br]\}.$$

     But $X$ can be easily partitioned into copies of $R$. For $k\in [r]$, let 
	 
	 $$C_{k}=\{x_{i,b(k-1)+j}:(i,j)\in [a/2]\times [b]\}\cup \{y_{i,b(k-1)+j}:(i,j)\in [a/2]\times [b]\}.$$
	 Clearly, the sets $C_{k}$ for $k=1,..,r$ partition $X$, so the only thing left is to show that $C_{k}$ is a copy of $R$. 
	 
	  We show that the function $\pi:C_{k}\rightarrow R$ defined by $\pi(x_{i,b(k-1)+j})=(i,j)$ and $\pi(y_{i,b(k-1)+j})=(i+a/2,b)$ is an isomorphism.
	  As $\pi$ defines an isomorphism between $\{x_{i,b(k-1)+j}:(i,j)\in [a/2]\times [b]\}$ and $[a/2]\times [b]$, and it defines an isomorphism between $\{y_{i,b(k-1)+j}:(i,j)\in [a/2]\times [b]\}$ and $[a/2+1]\times [b]$, it is enough to show that $x_{i,b(k-1)+j}\preceq y_{i',b(k-1)+j'}$ if and only if $j\leq j'$. But we have 
	  $$x_{i,b(k-1)+j}=(bi,r+a(b(k-1)+j))$$
	  and
	  $$y_{i',b(k-1)+j'}=(ab/2+b(i'-1)+1,a(b(k-1)+j'+\epsilon-1)+1),$$
	  so $x_{i,b(k-1)+j}\preceq y_{i',b(k-1)+j'}$ holds if and only if $r+aj\leq a(j'+\epsilon-1)+1$. By the choice of $\epsilon$, this inequality is equivalent to $j\leq j'$, finishing our proof.
	 
	 \begin{figure}
	 	\begin{center}
	 		\includegraphics[scale=1]{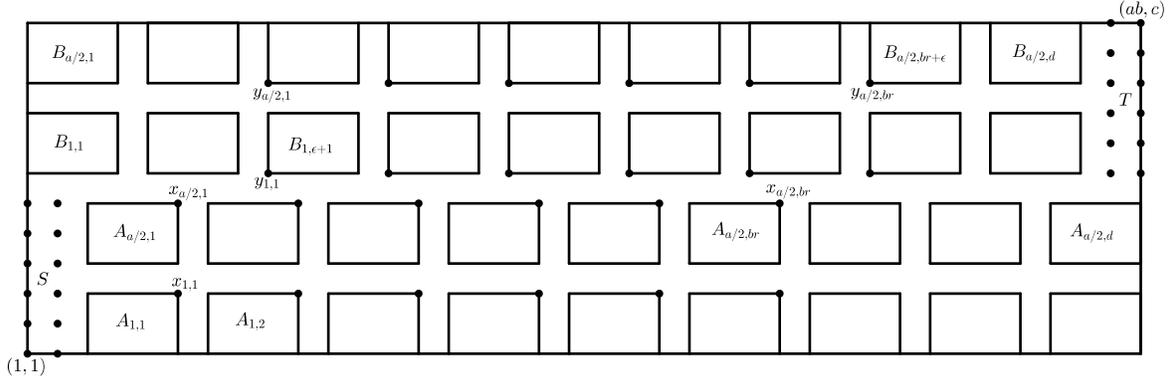}
	 	\end{center}
	 	\caption{Partitioning $[ab]\times [c]$ into copies of $R$}
	 	\label{image}
	 \end{figure}
	  
\end{proof}

 First, we show that if $P=[a_{1}]\times...\times [a_{d}]$, $a_{1}$ is even, $\alpha=|P|=a_{1}...a_{d}$, $c_{1},...,c_{d-1}>3\alpha^{2}$, then the grid $G=[\alpha]\times [c_{1}]\times...\times [c_{d-1}]$ can be partitioned into copies of $P$. 
 
 Let us proceed by induction on $d$. If $d=1$, the statement is trivial. Let us assume that $d\geq 2$ and the statement holds for all grids of dimension $d-1$. Let $a'_{1}=a_{1}a_{2}$ and $a'_{i}=a_{i+1}$ for $i\in [2,d-1]$. Define the $(d-1)$-dimensional grid $P'=[a'_{1}]\times...\times[a'_{d-1}]$. We have that $a'_{1}$ is even, $\alpha=a'_{1}...a'_{d-1}$ and $c_{1},...,c_{d-2}>3\alpha^{2}$. Hence, by our induction hypothesis, the grid $G'=[\alpha]\times [c_{1}]\times...\times [c_{d-2}]$ can be partitioned into copies of $P'$. But this gives a partition of $G=G'\times [c_{d-1}]$ into copies of $P'\times [c_{d-1}]$. Hence, it is enough to show that $P'\times [c_{d-1}]$ can be partitioned into copies of $P$. 
 
 As  $a_{1}$ is even and $c_{d-1}\geq3\alpha^{2}\geq a_{1}^{2}a_{2}+2a_{1}$, an immediate application of Claim \ref{rectangle} shows that the rectangle $R=[a_{1}a_{2}]\times [c_{d-1}]$ can be partitioned into copies of $R'=[a_{1}]\times [a_{2}]$. But then $P'\times [c_{d-1}]\cong R\times [a_{3}]\times...\times [a_{d}]$ can be partitioned into copies of $P=R'\times [a_{3}]\times....\times [a_{d}]$ as well.
 
 To finish our proof, let us assume that $P=[a_{1}]\times...\times [a_{d}]$, where $a_{1}$ is not necessarily even, and $c_{1},...,c_{d-1}\geq 12|P|^{2}$. Let $P'=[2a_{1}]\times...\times [a_{d}]$. We have $|P'|=2|P|$, so by the previous argument, we have that $[|P'|]\times [c_{1}]\times...\times[c_{d-1}]$ can be partitioned into copies of $P'$. But $P'$ can be trivially  further partitioned into two copies of $P$.

\end{proof}

We shall use the following immediate corollary of the previous theorem.

\begin{corollary}\label{gridcor}
	Let $P$ be a $d$-dimensional grid and let $c_{1},...,c_{d}$ be positive integers satisfying $c_{i}\geq 12|P|^{2}$ for $i\in [d]$, and suppose that at least one of $c_{1},...,c_{d}$ is divisible by $2|P|$. Then the $d$-dimensional grid $ [c_{1}]\times...\times[c_{d}]$ can be partitioned into copies of $P$.
\end{corollary}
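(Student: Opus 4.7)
The plan is straightforward: reduce to Lemma \ref{gridintogrid} by slicing along the coordinate whose length is divisible by $2|P|$. Without loss of generality, suppose $c_{d}$ is divisible by $2|P|$, and write $c_{d}=2|P|\cdot k$ for some positive integer $k$. Then I would partition the interval $[c_{d}]$ into $k$ consecutive blocks of length $2|P|$, namely $[2|P|(j-1)+1,\,2|P|\cdot j]$ for $j\in[k]$, each of which is isomorphic as a poset to $[2|P|]$.

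This induces a partition of the grid $[c_{1}]\times\cdots\times[c_{d}]$ into $k$ subgrids, each isomorphic to $[c_{1}]\times\cdots\times[c_{d-1}]\times[2|P|]$. Since the Cartesian product of posets is commutative up to isomorphism, each such subgrid is isomorphic to $[2|P|]\times[c_{1}]\times\cdots\times[c_{d-1}]$. The hypotheses $c_{i}\geq 12|P|^{2}$ for $i\in[d-1]$ are precisely what Lemma \ref{gridintogrid} requires, so each of the $k$ subgrids can be partitioned into copies of $P$.

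Combining these $k$ partitions yields the desired partition of the whole grid $[c_{1}]\times\cdots\times[c_{d}]$ into copies of $P$. There is no real obstacle here; the only thing to note is that we may need to relabel coordinates so that the coordinate of size divisible by $2|P|$ is listed last, which is justified by the isomorphism of posets under permutation of factors.
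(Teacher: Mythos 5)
Your proposal is correct and is essentially identical to the paper's own proof: both slice the coordinate divisible by $2|P|$ into blocks of length $2|P|$, obtaining copies of $[2|P|]\times[c_{1}]\times\cdots\times[c_{d-1}]$, and then apply Lemma \ref{gridintogrid} to each. You merely spell out the block decomposition more explicitly than the paper does.
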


\begin{proof}
Without loss of generality, suppose that $c_{d}$ is divisible by $2|P|$. Then we can trivially partition $[c_{1}]\times...\times [c_{d}]$ into copies of $[2|P|]\times [c_{1}]\times...\times [c_{d-1}]$. But $[2|P|]\times [c_{1}]\times...\times [c_{d-1}]$ can be partitioned into copies of $P$ by Lemma \ref{gridintogrid}.
\end{proof}

Now we are ready to prove that we can almost partition $2^{[n]}$ into copies of a grid.

\begin{proof}[Proof of Theorem \ref{gridthm}]
Let $h=12|P|^{2}$. By Theorem \ref{chainpartition}, there exists a constant $c$ such that if $m\geq ch^{2}$, then  $2^{[m]}$ can be partitioned into chains $C_{1},...,C_{r}$ such that $h\leq |C_{1}|\leq 2h$ and $|C_{2}|=...=|C_{r}|=h$. Suppose that $n\geq cdh^{2}$, then we can find positive integers $m_{1},...,m_{d}$ such that $n=m_{1}+...+m_{d}$ and $m_{i}\geq ch^{2}$ for $i\in [d]$. Then $2^{[m_{i}]}$ has a partition into  chains $C_{i,1},...,C_{i,r_{i}}$ such that $h\leq |C_{i,1}|\leq 2h$ and $|C_{i,2}|=...=|C_{i,r_{i}}|=h$. 

 The sets $C_{1,j_{1}}\times...\times C_{d,j_{d}}$ for $(j_{1},...,j_{d})\in [r_{1}]\times...\times [r_{d}]$ partition $2^{[m_{1}]}\times...\times 2^{[m_{d}]}$. Hence, as $2^{[n]}\cong 2^{[m_{1}]}\times...\times 2^{[m_{d}]}$, the Boolean lattice $2^{[n]}$ also has a partition into sets $B_{j_{1},...,j_{d}}$ for $(j_{1},...,j_{d})\in [r_{1}]\times...\times [r_{d}]$, where $B_{j_{1},...,j_{d}}$ is isomorphic to $C_{1,j_{1}}\times...\times C_{d,j_{d}}$. But then $B_{j_{1},...,j_{d}}$ is a $d$-dimensional grid isomorphic to $[|C_{1,j_{1}}|]\times...\times[|C_{d,j_{d}}|]$, where $|C_{i,j_{l}}|\geq 12|P|^{2}$ for $(i,l)\in [d]\times [d]$. If $(j_{1},...,j_{d})\neq (1,...,1)$, then we also have that at least one of $|C_{j_{1}}|,...,|C_{j_{d}}|$ is divisible by $2|P|$. Thus, applying Corollary \ref{gridcor}, $B_{j_{1},...,j_{d}}$ can be partitioned into copies of $P$ unless $(j_{1},...,j_{d})\neq (1,...,1)$. Setting $S=B_{1,...,1}$, we get that  $2^{[n]}\setminus S$ can be partitioned into copies of $P$. As 
 $$|S|=|C_{1,1}|...|C_{d,1}|\leq 2^{d}h^{d}=(24|P|^{2})^{d},$$
 our proof is finished.
\end{proof}

\section{Conclusion}\label{finalproof}

In this section, we finish the proof of our main theorem and conclude with some discussion. 

Let us remind the reader of the statement of our main theorem. We shall prove that if the poset $P$ has a unique maximal and minimal element, then all but a constant number of elements of $2^{[n]}$ can be covered by disjoint copies of $P$.

\begin{proof}[Proof of Theorem \ref{mainthm}]
	By Theorem \ref{gridpartitionthm}, there exists  a positive integer $d$ such that $[2|P|]^{d}$ can be partitioned into copies of $P$. Also, by Theorem \ref{gridthm}, there exists $n_{0}=\Omega(d(2|P|)^{4d})$ such that for $n>n_{0}$, all but at most $24^{d}(2|P|)^{2d^{2}}$ elements of $2^{[n]}$ can be covered by disjoint copies of $[2|P|]^{d}$.
	
	Setting $c(P)=\max\{2^{n_{0}},24^{d}(2|P|)^{2d^{2}}\}$, we get that all but at most $c(P)$ elements of $2^{[n]}$ can be covered by disjoint copies of $P$ for all positive integer $n$.
\end{proof}

Say that a poset $Q$ is \emph{connected} if its comparability graph is connected. The following generalization of Theorem \ref{mainthm} can be easily proved following the same line of ideas.  Let us only sketch its proof.

\begin{customthm}{B}
	Let $P,Q$ be posets such that $P$ has a unique maximal and minimal element, and $Q$ is connected. For every positive integer $n$, there exists a constant $c=c(P,Q)$ such that all but at most $c$ elements of $Q^{n}$ can be covered by copies of $P$.
	
	Also, if every prime divisor of $|P|$ also divides $|Q|$, then $Q^{n}$ can be partitioned into copies of $P$ for $n$ sufficiently large.
\end{customthm}

\begin{proof}[Sketch proof.]
	We are done if we prove the following generalization of Theorem \ref{gridthm}.
	
	\begin{customthm}{\protect\NoHyper\ref{gridthm}\protect\endNoHyper$^{+}$}
		Let $P$ be a grid and let $Q$ be a connected poset. Then there exists  $c=c(P,Q)$ such that the following holds. For every positive integer $n$, there exists $S\subset Q^{n}$ such that $Q^{n}\setminus S$ can be partitioned into copies of $P$ and $|S|\leq c$.
		
		Also, if every prime divisor of $|P|$ also divides $|Q|$, then $Q^{n}$ can be partitioned into copies of $P$ for $n$ sufficiently large.
	\end{customthm}
	
	The proof of this theorem easily follows from the combination of Corollary \ref{gridcor} and from the following generalization of Theorem \ref{chainpartition}, which appeared as Theorem 5.1 in \cite{me}.
	
	\begin{customthm}{\protect\NoHyper\ref{chainpartition}\protect\endNoHyper$^{+}$}(\cite{me}) Let $Q$ be a connected poset and let $c$ be a positive integer. If $n$ is sufficiently large, then $Q^{n}$ can be partitioned into chains $C_{1},...,C_{r}$ such that $c\leq |C_{1}|< 2c$ and $|C_{2}|=...=|C_{r}|=c$.
    \end{customthm}
    
    Again, this theorem is stated in \cite{me} without the additional condition that $c\leq |C_{1}|< 2c$, but the proof appearing in \cite{me} yields this slightly stronger result.
\end{proof}

Conjecture \ref{mainconjecture} is still open in the case $P$ does not have a unique maximal or minimal element. However, we believe that a similar approach as the one presented in this paper may lead to its proof in its full generality. Let us propose such an approach.

Let $S_{2k}$ denote the poset on $2k$ elements with partial ordering $\preceq$, whose elements can be partitioned into two $k$ element sets $A_{0},A_{1}$ such that $A_{0}$ and $A_{1}$ are antichains, and every element of $A_{0}$ is $\preceq$-smaller than every element of $A_{1}$.

\begin{enumerate}
	\item Show that for fixed $k$, if $n$ is sufficiently large, then $2^{[n]}$ can be partitioned into copies of $S_{2k}$ and a chain of size at least $k$.
	
	\item\label{step2} Show that if $P$ is a poset, then there exist positive integers $m$ and $t$ such that the family of copies of $P$ in $S_{2|P|}^{m}$ has a $t$-partition and a $(1\mod t)$-partition. This would  imply that $S_{2|P|}^{n}$ can be partitioned into copies of $P$ for $n$ sufficiently large.
	
	\item Show that for positive integers $k$ and $l$, if the positive integers $a$ and $b$ are sufficiently large, then $S_{4kl}\times [a]\times [b]$ can be partitioned into copies of $S_{2k}\times S_{2l}$. This would imply a statement similar to Lemma \ref{gridintogrid}, saying that the cartesian product $S_{2^{d}k_{1}...k_{d}}\times [a_{1}]\times..\times [a_{2d-2}]$ can be partitioned into copies of $S_{2k_{1}}\times...\times S_{2k_{d}}$, if $a_{1},...,a_{2d-2}$ are sufficiently large.
\end{enumerate}

These three steps, if proven, imply Conjecture \ref{mainconjecture} in a similar fashion as the combination of Theorem \ref{chainpartition}, \ref{gridpartitionthm}, \ref{gridthm}  imply Theorem \ref{mainthm}. The advantage of this approach is that after proving step \ref{step2}, which seems to be the most straightforward of the three steps, we can forget about the poset $P$ and we can work with the family of posets $\{S_{2k}\}_{k=1,2,...}$ instead.

\end{document}